\newtheorem{theorem}{Theorem}[section]
\newtheorem{lemma}[theorem]{Lemma}
\newtheorem{conjecture}[theorem]{Conjecture}
\theoremstyle{definition}
\theoremstyle{remark}
\newtheorem{remark}[theorem]{Remark}
\numberwithin{equation}{section}
\DeclareMathOperator{\vol}{vol}
\DeclareMathOperator{\area}{area}
\DeclareMathOperator{\per}{per}
\title{Bounding minimal solid angles of polytopes}
\author{Arseniy~Akopyan{$^\spadesuit$}}
\email{akopjan@gmail.com}
\author{Roman~Karasev{$^\clubsuit$}}
\email{r\_n\_karasev@mail.ru}
\urladdr{http://www.rkarasev.ru/en/}
\thanks{$^\spadesuit$ Supported by People Programme (Marie Curie Actions) of the European Union's Seventh Framework Programme (FP7/2007-2013) under REA grant agreement n$^\circ$[291734].}
\thanks{{$^\clubsuit$} Supported by the Russian Foundation for Basic Research grant 15-31-20403 (mol\_a\_ved).}
\thanks{{$^\clubsuit$} Supported by the Russian Foundation for Basic Research grant 15-01-99563 A}
\address{{$^\spadesuit$} Institute of Science and Technology Austria (IST Austria), Am Campus 1, 3400 Klosterneuburg, Austria}
\address{{$^\clubsuit$} Moscow Institute of Physics and Technology, Institutskiy per. 9, Dolgoprudny, Russia 141700}
\address{{$^\clubsuit$} Institute for Information Transmission Problems RAS, Bolshoy Karetny per. 19, Moscow, Russia 127994}
\begin{document}
	
\maketitle

\begin{abstract}
In this article we study the following question: What can be the measure of the minimal solid angle of a simplex in $\mathbb{R}^d$? We show that in dimensions three it is not greater than the solid angle of the regular simplex. And in dimension four the same holds for simplices sufficiently close to the regular simplex. We also study a similar question for trihedral and dihedral angles of polyhedra compared to those of regular solids.
\end{abstract}

\section{Introduction}

In~\cite[Question 7]{karasev2015bounds} it was conjectured that every simplex in $\mathbb R^d$ has a solid angle at a vertex, whose $(d-1)$-dimensional spherical measure is at most that of the solid angle of a regular simplex. In this note we confirm this conjecture for dimension $d=3$ and show that the $4$-dimensional regular simplex is a local extremum for the problem. Of course, for $d=2$ the question is trivial.

We start with a couple of general observations. It is impossible to bound the maximal solid angle of a simplex from below, which would be the opposite version of the conjecture. The corresponding example is an ``almost flat'' simplex, which can be described in the following way: Take $d+1$ points in $\mathbb R^{d-1}$ in convex position and then slightly perturb them in $\mathbb R^d$ so that they make a non-degenerate simplex. All the solid angles of such a simplex are close to zero. 

Another example shows that the problem cannot be approached by an averaging argument: A tetrahedron whose base is a triangle and whose remaining vertex is very close to the mass center of its base has one solid angle close to $2\pi$ and other three solid angles close to $0$. So the average is close to $\pi/2$, which is much more than the solid angle of the regular tetrahedron. Similar examples exist in all dimensions.

The similar problem for estimating the minimal dihedral angle of a simplex from above is more or less equivalent to the Jung theorem about covering a set of diameter $1$ by a minimal possible ball. See~\cite{maehara2013dihedral} for a nice proof.

\subsection*{Acknowledgments.}
We would like to thank Ivan Izmestiev for pointing our attention to Formula \ref{eq:three-crofton}, Alexey Balitskiy, Senya Shlosman, Oleg Ogievetsky and Endre Makai for fruitful discussions.

The first version of the paper contained wrong statements about global minimality in the four dimensional case. We are grateful to the anonymous reviewers from the journal Discrete \& Computational Geometry, who pointed out that B\"{o}hm only proved the local minimality of the regular tetrahedron in the isoperimetric problem for spherical tetrahedra. 

\section{Minimal solid angle of simplex: The general approach}

Now let us describe our approach to the problem.

\begin{conjecture}
	\label{con:main conjecture}
Any $d$-dimensional simplex $T\subset\mathbb{R}^d$, has a solid angle not greater than the solid angle of the $d$-dimensional regular simplex.
\end{conjecture}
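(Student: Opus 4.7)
The plan is to attack Conjecture~\ref{con:main conjecture} by a compactness and variational argument. Let $\mathcal{S}_d$ denote the moduli space of $d$-simplices in $\mathbb{R}^d$ modulo similarity, compactified by the boundary of degenerate simplices, and set $f(T) := \min_i \omega_i(T)$. Since $f$ is continuous on the interior of $\mathcal{S}_d$ and vanishes on the degenerate boundary (as observed in the introduction, any degenerate simplex has at least one vanishing solid angle), $f$ attains its maximum at some non-degenerate $T^\ast$. The goal is to show that the only maximizer is the regular simplex, which would give $f(T) \le f(T_{\mathrm{reg}}) = \omega_{\mathrm{reg}}$ for every simplex $T$.

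The first step is to identify the set $I \subseteq \{0,\dots,d\}$ of vertices that realize the minimum at $T^\ast$. If $I$ were a proper subset, one should be able to perturb $T^\ast$ in a direction that strictly increases $\omega_i$ for every $i \in I$, contradicting maximality; formally, this amounts to checking that the convex hull of the gradients $\nabla_T \omega_i$ for $i \in I$, taken modulo the similarity group, avoids the origin. The required differential formulas for the solid angle in terms of vertex positions are standard and follow from the Schl\"afli formula applied to the spherical link at each vertex. The expected conclusion of this step is the full equality $\omega_0(T^\ast) = \dots = \omega_d(T^\ast)$.

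The hard step is to deduce from equality of all vertex solid angles, together with the first-order optimality condition, that $T^\ast$ must be regular. Equality alone does not force regularity in dimension $d \ge 3$, so additional structure must be brought in. One promising route, suggested by the authors' three-dimensional proof and the Crofton-type identity~(\ref{eq:three-crofton}) they invoke, is to integrate over $S^{d-1}$ a quantity that re-expresses each vertex solid angle as a spherical volume and then to apply a sharp spherical isoperimetric-type inequality. An alternative is an inductive approach: the vertex figure at $v_i$ is itself a $(d-1)$-simplex inscribed in the sphere, which opens the possibility of leveraging the $(d-1)$-dimensional case of the conjecture for these figures and reducing the problem downward.

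The principal obstacle is precisely this last step. The authors' own observation that only local extremality is currently known in dimension four strongly suggests that a genuinely new spherical inequality (or a genuinely new rigidity statement for simplices with equal vertex solid angles) is required. A secondary technical obstacle is that $f = \min_i \omega_i$ is only piecewise smooth, so the first-order analysis has to be carried out in the subdifferential sense; and one still has to rule out the possibility that the maximum of $f$ occurs on a lower-dimensional symmetry stratum of $\mathcal{S}_d$ (for example, orthocentric or isogonal simplices) whose generic point is not similar to the regular simplex.
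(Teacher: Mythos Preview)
Your proposal is an outline with an acknowledged gap, not a proof, and the paper does not claim to prove this statement in full generality either: it is labeled a \emph{Conjecture}, established only for $d=3$ and only locally for $d=4$. So the relevant comparison is between strategies.

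The paper's strategy is entirely different from yours. Rather than studying the maximizer of $\min_i \omega_i(T)$ directly, the paper passes to the polar cones: the spherical simplices $V_i^\circ$ polar to the vertex cones form the normal fan of $T$ and hence tile $\mathbb S^{d-1}$, so one of them has $\vol(V_i^\circ)\ge \vol(C_{d-1}^\circ)$ by pigeonhole. This reduces Conjecture~\ref{con:main conjecture} to Conjecture~\ref{con:isodual inequality} (maximizing $\vol S$ under fixed $\vol S^\circ$ among spherical simplices), which in turn is attacked via the spherical isoperimetric inequality for simplices (Conjecture~\ref{con:isoperemetric simplex}) together with the Crofton-type identities~\eqref{eq:two-crofton} and~\eqref{eq:three-crofton}. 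The pigeonhole step is what replaces your entire variational set-up; no analysis of a maximizer of $\min_i\omega_i$ is ever performed.

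Your proposal has two genuine gaps. First, the ``first step'' claim that at the maximizer all vertex solid angles coincide is not justified: you reduce it to the assertion that the convex hull of the gradients $\nabla\omega_i$, $i\in I$, misses the origin modulo similarities, but you give no argument for this, and it is not obvious. Second, and more seriously, you yourself observe that equality of all vertex solid angles does not force regularity for $d\ge 3$; dimension counting already shows the equiangular locus has positive dimension. Your two suggested ways around this are not arguments: the ``Crofton-type'' route is exactly the paper's route and leads to the same open isoperimetric problem (Conjecture~\ref{con:isoperemetric simplex}) that blocks the paper beyond $d=4$; and the inductive route via vertex figures does not close, since the vertex figure of a maximizer need not itself be a maximizer in one dimension lower. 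In short, your outline reaches the same wall the paper reaches, but without the clean reduction that makes the $d=3$ case go through.
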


We will show that for any $d$ Conjecture \ref{con:main conjecture} follows from the following:

\begin{conjecture}
	\label{con:isodual inequality}
	For $(d-1)$-dimensional spherical simplices $S \subset \mathbb{S}^{d-1}$, the maximal $\vol S$ under given $\vol S^\circ$ is attained at the regular simplex.
\end{conjecture}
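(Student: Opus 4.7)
My plan is to attack Conjecture \ref{con:isodual inequality} by variational methods, using spherical polar duality to set up a Lagrange multiplier problem and then combining local critical-point analysis with a symmetrization-based global argument (feasible in low dimensions).

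First, I parametrize: a $(d-1)$-dimensional spherical simplex $S\subset\mathbb{S}^{d-1}$ is determined by its $d$ vertices $u_1,\ldots,u_d$, and the polar $S^\circ$ has facet hyperplanes $u_i^\perp$, equivalently its vertices are the inward unit normals to the facets of $S$. Using Schl\"afli's differential formula, which expresses $d\,\vol(S)$ as a linear combination of the differentials of the dihedral angles along the codimension-two faces weighted by the volumes of those faces, I can write the gradients of $\vol(S)$ and $\vol(S^\circ)$ with respect to the vertex positions. The Lagrange condition $\nabla\vol(S)=\lambda\,\nabla\vol(S^\circ)$ is then automatically satisfied at the regular simplex by the symmetric action of $S_d$, so the regular simplex is a critical point.

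Second, I would establish local maximality at the regular simplex by computing the constrained second variation. After removing the trivial directions (infinitesimal rotations of $\mathbb{S}^{d-1}$ and the one-dimensional direction along the constraint), the Hessian becomes a quadratic form on a finite-dimensional space carrying a natural representation of $S_d$; decomposing this space into isotypic components reduces the negativity check, by Schur's lemma, to one sign computation per irreducible summand. This is essentially the scheme used by B\"ohm for the spherical isoperimetric problem on the primal side, here adapted to the dual functional.

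Third, passing from local to global is the main obstacle. In dimension $d=3$ the conjecture reduces, via $\vol(S)=\alpha+\beta+\gamma-\pi$ and $\vol(S^\circ)=2\pi-(a+b+c)$ for a spherical triangle with angles $\alpha,\beta,\gamma$ and side lengths $a,b,c$, to the classical statement that among spherical triangles of prescribed perimeter the equilateral one has maximal area; this I would prove by a Steiner-style symmetrization, replacing any two sides by their common average while keeping the third fixed (so perimeter is preserved and area does not decrease), and applying this operation iteratively so that the sequence of areas is monotone in a compact space and converges to a fixed point, which must be the equilateral configuration. In dimensions $d\geq 4$ no analogous symmetrization is available, and in light of the acknowledgment that even B\"ohm handled only the local version, I expect the global case to remain conjectural; a speculative route is a continuous spherical symmetrization across a great $(d-2)$-sphere combined with a compactness argument exploiting that in any degeneration either $\vol(S)\to 0$ or $\vol(S^\circ)\to 0$.
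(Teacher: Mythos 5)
Your $d=3$ reduction is essentially the paper's: the identity $\vol(S^\circ)=2\pi-(a+b+c)$ is the duality relation $\area X+\per(X^\circ)=2\pi$ of the paper applied to $X=S^\circ$, and both routes land on the classical fact that among spherical triangles of prescribed perimeter the equilateral one has maximal area (the paper uses the equivalent dual form, Lemma~\ref{lem:bohm triangle}, applied to $S^\circ$). The one real gap in that part is your symmetrization step: the claim that replacing two sides by their common average, with the third side fixed, does not decrease the area of a \emph{spherical} triangle is not the Euclidean $\tfrac12 ch$ computation — the area is the angular excess, and you must actually show that the spherical ellipse $\{a+b=\mathrm{const}\}$ meets the Lexell circle through its symmetric point only at that point. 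This is true and classical, but it needs a proof or a citation (Fejes T\'oth, as the paper does); as written it is an assertion.

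For $d=4$ the proposal has a genuine gap. You correctly note that the regular simplex is a critical point of the constrained problem (the $S_4$-fixed subspace of deformations is one-dimensional, so the two gradients are proportional), but the entire content of local maximality is the sign of the constrained Hessian on the nontrivial isotypic components, and you do not compute it; ``one sign computation per irreducible summand'' is exactly the hard part of B\"ohm's paper, which you would have to redo for a different functional. The paper sidesteps this computation entirely via the identity $\area(\partial X)+\area(\partial X^\circ)=4\pi$ for convex bodies in $\mathbb S^3$ (a consequence of the Crofton formula together with the fact that polarity on $\mathbb S^3$ maps lines to lines and preserves the invariant measure). With that identity in hand, fixing $\vol(S^\circ)$ and applying B\"ohm's local isoperimetric inequality to $S^\circ$ gives $\area(\partial S)\le\area(\partial C)$, and a second application gives $\vol(S)\le\vol(C)$ — no new second-variation analysis is required. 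Without either that identity or a completed Hessian computation, your plan establishes nothing beyond criticality in dimension $4$; and, as you acknowledge, the global statement for $d\ge 4$ remains open in the paper as well, so no points are lost there.
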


\begin{proof}[Reduction of Conjecture \ref{con:main conjecture} to Conjecture~\ref{con:isodual inequality}]
In what follows we associate convex cones in $\mathbb R^d$ with convex subsets of the sphere $\mathbb S^{d-1}$. So the solid angle at a vertex of the simplex is a subset of $\mathbb S^{d-1}$ in our argument. Also, for a convex closed $X\subset\mathbb S^{d-1}$, we consider its polar $X^\circ$ as a subset of the sphere $\mathbb S^{d-1}$.

Now let $V_0$, $V_1$, \dots, $V_{d}$ be the $d$-dimensional spherical simplices corresponding to solid angles of the vertices of $T$. Let $V_0^\circ$, $V_1^\circ$, \dots, $V_{d}^\circ$ be their polar simplices. Evidently, the spherical simplices (Euclidean cones) $V_i^\circ$ constitute the normal fan of $T$ and therefore provide a partition of $\mathbb S^{d-1}$.

Let $C_{d-1}$ be the $(d-1)$-dimensional spherical simplex corresponding to the solid angle of the regular simplex in $\mathbb{R}^{d}$, like one of the above described $V_i$'s. It is clear that $C^\circ_{d-1}$ is the regular simplex from the standard partition of the $(d-1)$-dimensional sphere into $d+1$ simplices.

Assume Conjecture \ref{con:isodual inequality} is valid. Choosing $i$ with maximal $\vol (V_i)^\circ$ we obtain that its volume is not less than $\vol (C_{d-1}^\circ)$, therefore $\vol (V_i) \leq \vol (C_{d-1})$.
\end{proof}

In the next sections we prove Conjecture \ref{con:isodual inequality} for the case $d=3$ and prove a weaker claim, the local extremality of the regular simplex, for $d=4$.

The proof will be based on the isoperimetric inequality for spherical simplices, which is only solved for the case of the two-dimensional sphere. For the three-dimensional sphere, J.~B\"{o}hm~\cite{bohm1986zumisoperimetrischen} proved the local extremality of the regular simplex, that is why our result is local for $d=4$.

\begin{conjecture}[Isoperimetric inequality for a spherical simplex]
\label{con:isoperemetric simplex}
Out of $(d-1)$-dimensional spherical simplices with fixed volume, the regular simplex has the minimal surface $(d-1)$-volume.
\end{conjecture}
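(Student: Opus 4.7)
The plan is to cast Conjecture~\ref{con:isoperemetric simplex} as a constrained minimization on the moduli space of spherical $(d-1)$-simplices and to identify the critical points using the symmetry of the regular simplex together with the spherical Schl\"{a}fli formula.

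First, I would parameterize a spherical $(d-1)$-simplex by its vertices $v_0,\dots,v_{d-1}\in\mathbb{S}^{d-1}$ modulo the action of $O(d)$. Both the volume $V$ and the total boundary measure $F$ are smooth on this moduli space away from the degenerate locus. A compactness and degeneration argument should reduce the conjecture to finding interior critical points of $F$ subject to $V=\const$: as a simplex collapses to a lower-dimensional object or develops a long thin ``finger'', one can show that the scale-invariant ratio $F/V^{(d-2)/(d-1)}$ blows up, so any minimum is attained in the interior.

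Next, I would invoke the spherical Schl\"{a}fli formula $(d-2)\, dV=\sum_R V_{d-3}(R)\, d\theta_R$, where the sum is over the codimension-two faces $R$ and $\theta_R$ is the dihedral angle along $R$. The analogous identity applied to each facet lets one compute $dF$. At a critical simplex with Lagrange multiplier $\lambda$, the identity $dF=\lambda\, dV$ must hold for every smooth vertex deformation. Exploiting this vertex by vertex should force the $d$ vertex figures (themselves spherical $(d-2)$-simplices) to be mutually congruent, and by induction on $d$ together with the rigidity of a spherical simplex determined by its vertex-figure data, one concludes that any critical simplex has a vertex-transitive symmetry group and is therefore regular.

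The hardest step will be ruling out non-regular critical points, or equivalently showing that $F-\lambda V$ is globally minimized at the regular simplex rather than only locally. This is exactly where B\"{o}hm's analysis stops: his second-variation calculation establishes positive-definiteness of the Hessian only in a neighborhood of the regular simplex. To close the gap, I would attempt a spherical rearrangement procedure: given an asymmetric simplex, reflect one vertex through the hyperplane bisecting two others so as to equalize an edge length, and try to show that this operation weakly decreases $F$ at fixed $V$. The genuine obstacle is that such a reflection need not preserve the simplex property (the image may leave convex position with the remaining vertices), and one must either restrict to a ``stable'' region where it does or pass to a more flexible object such as the normal fan. A mass-transport argument on the polar partition of $\mathbb{S}^{d-1}$ is an appealing alternative, but as the literature makes clear, no currently available technique is known to close this gap for $d\ge 4$, and I expect that a genuinely new idea is required.
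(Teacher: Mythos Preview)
The statement you are addressing is labelled a \emph{Conjecture} in the paper, and the paper does not supply a proof. The authors say explicitly that the isoperimetric inequality for spherical simplices ``is only solved for the case of the two-dimensional sphere'' (their Lemma~\ref{lem:bohm triangle}), and that for the three-dimensional sphere B\"{o}hm established only \emph{local} minimality of the regular tetrahedron (their Lemma~\ref{lem:bohm tetrahedron}); for $d\ge 5$ nothing is claimed. So there is no ``paper's own proof'' to compare against, and your closing sentence, conceding that a genuinely new idea is required for $d\ge 4$, is in complete agreement with the paper's position.

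That said, your outline overstates what the Lagrange-multiplier step delivers. The Schl\"{a}fli identity expresses $dV$ via the dihedral angles and the $(d-3)$-volumes of the ridges, and a facet-by-facet version gives $dF$; but the assertion that ``$dF=\lambda\, dV$ for every deformation forces the vertex figures to be congruent, hence the simplex is regular'' is exactly where the problem lies, not a preliminary to it. Even in B\"{o}hm's three-dimensional case one does not get uniqueness of the critical point from first-order data; what he proves is positive-definiteness of the second variation at the regular tetrahedron, which yields local but not global minimality. Your symmetrization move (reflecting a vertex across a bisecting hyperplane to equalize an edge) does not in general preserve either the volume constraint or convex position, as you yourself note, and no repair is offered. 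In short, your proposal is a reasonable inventory of the standard tools and an honest statement of where they run out, but it is not a proof---and the paper does not pretend to contain one either.
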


We should mention that if Conjectures~\ref{con:isodual inequality} and \ref{con:isoperemetric simplex} both are true than the statement of Conjectures~\ref{con:main conjecture} could be generalized:

\begin{conjecture}[Generalization of Conjecture~\ref{con:main conjecture}]
	\label{con:main conjecture generalized}
Any $d$-dimensional simplex $T\subset\mathbb{R}^{d}$ for any $k$, $k<d-1$ has $k$-face with the $(d-k-1)$-dimensional solid angle corresponding to this $k$-face not greater than the $(d-k-1)$-dimensional solid angle of the $k$-face of $d$-dimensional regular simplex.
\end{conjecture}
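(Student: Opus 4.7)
My plan is to perform an iterative face-descent through the normal fan of $T$, combining both Conjecture~\ref{con:isodual inequality} and Conjecture~\ref{con:isoperemetric simplex} (in all needed dimensions). Denote by $A_{d,j}^\circ$ the normal cone at any $j$-face of the regular $d$-simplex, viewed as a $(d-j-1)$-dimensional spherical simplex; by the $S_{j+1}\times S_{d-j}$ symmetry of the regular simplex, $A_{d,j}^\circ$ is a \emph{regular} spherical simplex in its own right, and its $d-j$ facets are all isometric to $A_{d,j+1}^\circ$. Its polar $A_{d,j}$ is the solid angle of the corresponding $j$-face of the regular simplex, so in particular $A_{d,0}=C_{d-1}$.

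First I would repeat the vertex-selection used in the reduction of Conjecture~\ref{con:main conjecture} to Conjecture~\ref{con:isodual inequality}: choose a vertex $v$ of $T$ whose normal cone has maximal $(d-1)$-volume among all vertices. Since the $d+1$ vertex normal cones tile $\mathbb{S}^{d-1}$, we have $\vol V_v^\circ\ge\vol C_{d-1}^\circ=\vol A_{d,0}^\circ$, and we set $\psi_0:=V_v^\circ$.

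Next I would build inductively a descending chain $\psi_0\supset\psi_1\supset\cdots\supset\psi_k$, with $\psi_j$ a $(d-j-1)$-dimensional face of $V_v^\circ$ satisfying $\vol\psi_j\ge\vol A_{d,j}^\circ$. Given $\psi_j$, Conjecture~\ref{con:isoperemetric simplex} supplies the key inequality: since $\vol\psi_j\ge\vol A_{d,j}^\circ$ and $A_{d,j}^\circ$ is already the regular simplex of that volume, the $(d-j-2)$-surface of $\psi_j$ is at least that of $A_{d,j}^\circ$, which equals $(d-j)\vol A_{d,j+1}^\circ$ because $A_{d,j}^\circ$ has $d-j$ facets each isometric to $A_{d,j+1}^\circ$. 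As $\psi_j$ itself has only $d-j$ facets, pigeonhole produces a facet $\psi_{j+1}$ with $\vol\psi_{j+1}\ge\vol A_{d,j+1}^\circ$. After $k$ steps, $\psi_k$ is a $(d-k-1)$-dimensional face of $V_v^\circ$ and hence equals the normal cone of some $k$-face $F\ni v$ of $T$; therefore $\vol V_F^\circ=\vol\psi_k\ge\vol A_{d,k}^\circ$. A final application of Conjecture~\ref{con:isodual inequality} in dimension $d-k-1$ (together with the monotone dependence of primal volume on polar volume along the one-parameter family of regular spherical simplices) converts this polar bound into $\vol V_F\le\vol A_{d,k}$, as required.

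The main obstacle is not in the descent itself but in the availability of the two input conjectures: Conjecture~\ref{con:isoperemetric simplex} is known only on the two-dimensional sphere (and locally on the three-sphere), while Conjecture~\ref{con:isodual inequality} is the main object of the present paper. A minor technical point that needs checking along the way is the monotonicity, for the family of regular spherical simplices, of surface as a function of volume in the range encountered during the descent; this is harmless because every $A_{d,j}^\circ$ appearing is far smaller than a hemisphere, where the spherical isoperimetric profile is increasing. Thus the plan is a genuine reduction of Conjecture~\ref{con:main conjecture generalized} to the two ``base'' conjectures, valid in whichever dimensions both of them have been established.
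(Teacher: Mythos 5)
Your proposal is correct and follows essentially the same route as the paper's own sketched argument: pick the vertex whose normal cone has maximal volume, descend through facets by repeatedly combining Conjecture~\ref{con:isoperemetric simplex} with a pigeonhole over the $d-j$ facets, and finish by applying Conjecture~\ref{con:isodual inequality} to the resulting $(d-k-1)$-dimensional coface, which is polar to the solid angle at a $k$-face of $T$. You simply make explicit two points the paper leaves implicit (the facet count in the pigeonhole and the monotonicity along the one-parameter family of regular spherical simplices), so this is the same conditional reduction to the two base conjectures.
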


Indeed, as in the reduction above, one can consecutively apply Conjecture~\ref{con:isoperemetric simplex} to maximal faces of simplex $\vol (V_i)^\circ$ of maximal volume and obtain that one of its $k$-coface has volume not less than the volume of the $k$-coface of $\vol (C_{d-1}^\circ)$. Note that this $k$-coface is polar to the solid angle of the corresponding $k$-face of $T$. Then applying Conjecture~\ref{con:isodual inequality} we obtain that the corresponding $k$-face has the $(d-k-1)$-dimensional solid angle less then the $(d-k-1)$-dimensional solid angle of a $k$-face of the regular simplex.

\section{The three-dimensional case}

We show that for $d=3$ all four Conjectures~\ref{con:main conjecture}-\ref{con:main conjecture generalized} hold. 

The following lemma is well-known and it is the two-dimensional version of Conjecture~\ref{con:isoperemetric simplex}:

\begin{lemma}[Isoperimetric inequality for a spherical triangle, \cite{toth1965regular,bohm1986zumisoperimetrischen}]
\label{lem:bohm triangle}
Out of spherical triangles with fixed area, the regular triangle has the minimal perimeter.
\end{lemma}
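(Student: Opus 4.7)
The plan is to apply L'Huilier's formula to convert the spherical isoperimetric problem for triangles into an algebraic extremal problem that Jensen's inequality settles, via the equivalent dual statement: among triangles with fixed perimeter, the equilateral one maximizes area.

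Recall L'Huilier's theorem: for a spherical triangle with sides $a,b,c$, semiperimeter $s=(a+b+c)/2$, and area $E$,
$$\tan\frac{E}{4}=\sqrt{\tan\frac{s}{2}\,\tan\frac{s-a}{2}\,\tan\frac{s-b}{2}\,\tan\frac{s-c}{2}}.$$
Fixing $s$ fixes the outer factor $\tan(s/2)$, so maximizing $E$ reduces to maximizing the product $\tan(\tfrac{s-a}{2})\tan(\tfrac{s-b}{2})\tan(\tfrac{s-c}{2})$ subject to $(s-a)+(s-b)+(s-c)=s$, with all three arguments positive by the spherical triangle inequality. The key analytic input is the concavity of $t\mapsto\log\tan(t/2)$ on $(0,\pi/2)$, visible from its second derivative $-\cos t/\sin^{2}t$. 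Jensen's inequality then forces the logarithm of the product (and hence the product itself) to be maximized exactly when the three arguments coincide, i.e.\ when $a=b=c$, proving that the equilateral triangle maximizes area for fixed perimeter.

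To pass from this ``fixed perimeter'' statement to the ``fixed area'' statement of the lemma, I would use a monotonicity argument. The area $A_{\mathrm{eq}}(p)$ of the equilateral triangle with perimeter $p$ is a strictly increasing function of $p$ (directly from L'Huilier with $a=b=c$). Given an arbitrary non-equilateral triangle $T$ with area $A_{0}$ and perimeter $P(T)$, the strict inequality just proved gives $A_{0}<A_{\mathrm{eq}}(P(T))$; inverting the monotone function yields $P(T)>A_{\mathrm{eq}}^{-1}(A_{0})$, which is exactly the perimeter of the equilateral triangle with area $A_{0}$.

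The main obstacle is staying within the range $(0,\pi/2)$ where $\log\tan(t/2)$ is concave: this requires $s-a,\,s-b,\,s-c<\pi/2$, which can fail for very ``large'' spherical triangles. This range comfortably covers the triangles arising in the reduction to Conjecture~\ref{con:main conjecture} (polars of solid angles of tetrahedra), so the Jensen argument suffices for our application. For the general case one could instead invoke a geometric symmetrization: fix one side and move the opposite vertex along the constant-area curve, show perimeter is minimized at the isosceles position, and iterate --- but the analytic difficulty is precisely that on the sphere this constant-area locus is not a small circle parallel to the base, so the Euclidean reflection trick does not transplant and one must work directly with the Gauss--Bonnet relation.
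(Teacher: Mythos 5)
The paper itself does not prove this lemma: it is quoted as a classical result with references to Fejes T\'oth and B\"ohm, so you are supplying a proof where the paper gives only a citation. Your L'Huilier route is legitimate in principle, but as written it has a genuine gap. Jensen's inequality for $t\mapsto\log\tan(t/2)$ requires all three arguments $s-a$, $s-b$, $s-c$ to lie in $(0,\pi/2)$, and your claim that this range ``comfortably covers the triangles arising in the reduction'' is false. In the reduction the lemma is applied to the polar triangle $S^\circ$ of largest area among the four normal cones of an \emph{arbitrary} tetrahedron. For a nearly flat tetrahedron whose vertices are close to a planar convex quadrilateral, each normal cone degenerates to a lune, so the chosen $S^\circ$ is a triangle with two sides close to $\pi$, one short side, and area at least $\pi$; for it $s-a$ is close to $\pi$, far outside $(0,\pi/2)$. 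Hence the restricted statement your Jensen argument establishes does not cover the paper's application, and the unrestricted lemma is not proved by your argument.

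The gap is fixable inside your own framework. Since $a+b+c<2\pi$, the numbers $x=s-a$, $y=s-b$, $z=s-c$ satisfy $x+y+z=s<\pi$, so at most one of them can exceed $\pi/2$; instead of Jensen, maximize $\log\tan(x/2)+\log\tan(y/2)+\log\tan(z/2)$ on the open simplex $\{x+y+z=s,\ x,y,z>0\}$ directly. The function tends to $-\infty$ at the boundary, and at an interior critical point the Lagrange condition gives $\sin x=\sin y=\sin z$; the alternative $x=\pi-y$ is impossible because then $x+y=\pi>s$, so $x=y=z$ is the unique critical point and the global maximizer, and your monotonicity inversion then yields the lemma in full generality. (Equivalently, a smoothing step closes the range issue: if $x>\pi/2$, so that $x$ is the largest variable and $y<\pi-x$, then $\frac{d}{d\delta}\left[\log\tan\frac{x-\delta}{2}+\log\tan\frac{y+\delta}{2}\right]=\frac{1}{\sin(y+\delta)}-\frac{1}{\sin(x-\delta)}>0$, so equalizing increases the product until all variables are below $\pi/2$ and Jensen applies.) One further correction: your closing remark that the constant-area locus of the apex over a fixed base is not a circle is mistaken --- it is a circle, Lexell's circle, through the antipodes of the base endpoints (the paper alludes to it right after \eqref{eq:two-crofton}), and Fejes T\'oth's classical proof of this lemma proceeds exactly through that symmetrization.
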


From the spherical triangle formula, for any triangle $\triangle \in \mathbb{S}^2$ and its polar $\triangle^\circ$ the following equation follows immediately:
\begin{equation}
	\label{eq:two-crofton}
	\area \triangle + \per (\triangle ^\circ)=2\pi.
\end{equation}
It seems that this formula was well known before, for example L. Fejes T\'{o}th using \eqref{eq:two-crofton} proves existence of the Lexell circle \cite[\S 1.8]{toth1972lagerungen}.

From this formula, in particular, it follows that $\area (C)+\per(C^\circ)=2\pi$, for the regular triangle $C$ with $\area (C^\circ)=\area (S^\circ)$.
Now from Lemma~\ref{lem:bohm triangle} follows that the perimeter of $S^\circ$ is not less than the perimeter of $C^\circ$. Using~\eqref{eq:two-crofton} we obtain that $\area (S) \geq \area (C)$.
That is the two-dimensional version of Conjecture~\ref{con:isodual inequality}, and Conjectures~\ref{con:main conjecture} and \ref{con:main conjecture generalized} for $d=3$ follow as corollaries.

It is clear that the scheme of this proof works for platonic solids as well; let us state the corresponding results.

\begin{theorem}
	\label{thm:polytopes in r3}
Let $P$ be a polyhedron (non necessarily convex) in $\mathbb R^3$, combinatorially equivalent to a platonic solid $P_0$. Then 
\begin{enumerate}[(i)]
	\item \label{item: solids of polytopes in r3}
	one of its solid angles is not greater than the solid angle of the platonic solid $P_0$;
	\item \label{item: dihedral of polytopes in r3}
	one of its dihedral angles is not greater than the dihedral angle of $P_0$.
\end{enumerate}

For a simple polyhedron $P$, the combinatorial equivalence to $P_0$ is not needed, it suffices to assume that $P$ and $P_0$ have the same number of vertices.
\end{theorem}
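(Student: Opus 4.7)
My plan is to mimic verbatim the three-dimensional simplex argument sketched above, replacing triangular vertex figures by $n$-gonal ones. Let $P_0$ have $V$ vertices, each of valence $n$, solid angle $\omega_0$, and dihedral angle $\theta_0$. For each vertex $v_i$ of $P$, let $V_i\subset\mathbb S^2$ be the spherical $n$-gon encoding the solid angle at $v_i$ (its side lengths are the planar face angles, its vertex angles are the dihedral angles at the incident edges), and let $V_i^\circ$ denote its polar. Identity~\eqref{eq:two-crofton} extends to arbitrary spherical $n$-gons by the same derivation from $\area = \sum_j \alpha_j - (n-2)\pi$, yielding
\[
\area(V_i)+\per(V_i^\circ)=2\pi.
\]

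The three steps are as follows. \textbf{Averaging:} when $P$ is convex the polars $V_i^\circ$ form the normal fan and tile $\mathbb S^2$, so $\sum_i \area(V_i^\circ)=4\pi$ and some vertex $v_{i_0}$ satisfies $\area(V_{i_0}^\circ)\ge 4\pi/V=\area(C_0^\circ)$, where $C_0^\circ$ is the regular polar vertex figure of $P_0$. \textbf{Isoperimetry:} apply the $n$-gon version of Lemma~\ref{lem:bohm triangle}, namely that among spherical $n$-gons of fixed area the regular one minimises perimeter, to conclude $\per(V_{i_0}^\circ)\ge \per(C_0^\circ)$. \textbf{Conclusion:} for part~(i), equation~\eqref{eq:two-crofton} gives $\area(V_{i_0})=2\pi-\per(V_{i_0}^\circ)\le 2\pi-\per(C_0^\circ)=\omega_0$; for part~(ii), the perimeter bound reads $\sum_{e\ni v_{i_0}}(\pi-\theta_e)\ge n(\pi-\theta_0)$, so at least one incident dihedral angle satisfies $\theta_e\le\theta_0$.

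For a \emph{simple} polyhedron every vertex is already $3$-valent, so the isoperimetry step requires only Lemma~\ref{lem:bohm triangle} itself; the combinatorial type plays no role beyond fixing $V$, which is why the hypothesis can be weakened to equal vertex count. The main obstacle I expect to encounter is the non-convex case: the normal fan no longer tiles $\mathbb S^2$, so the averaging step must be replaced by a degree-one Gauss-map argument (available because combinatorial equivalence to a platonic solid forces $P$ to be topologically a sphere), giving $\sum_i\area(V_i^\circ)=4\pi$ in a signed sense, and one then has to verify that the $n$-gonal isoperimetric inequality still applies to the extremal $V_{i_0}^\circ$, which may a priori fail to be convex.
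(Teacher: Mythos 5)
Your proposal follows essentially the same route as the paper: normal-fan averaging, the polarity identity \eqref{eq:two-crofton} extended to spherical $n$-gons, and Fejes T\'oth's isoperimetric inequality (Lemma~\ref{lem:isoperimetric polyton}, which the paper packages as Theorem~\ref{thm:isoperimetric polygon}); your direct pigeonhole for part~\eqref{item: dihedral of polytopes in r3} is merely a rephrasing of the paper's derivation of it from part~\eqref{item: solids of polytopes in r3} via the spherical area formula. The non-convex case you flag as the main remaining obstacle is likewise left unelaborated in the paper's own proof, so your treatment matches the paper's level of detail there.
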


The statement \eqref{item: dihedral of polytopes in r3} follows from \eqref{item: solids of polytopes in r3}. Indeed, the minimal dihedral angle of those adjacent to the minimal solid angle is not greater than the dihedral angle of $P_0$, because otherwise, by the spherical area formula, the solid angle would not be minimal.

For simple platonic solids (the cube and the dodecahedron), the proof of \eqref{item: solids of polytopes in r3} is absolutely the same as the proof of the case of the tetrahedron. For non-simple case (the octahedron and the icosahedron), it is needed to the generalize the inequality between the area of a triangle and the area of its dual as follows:

\begin{theorem}
\label{thm:isoperimetric polygon}
For a spherical $n$-gon $P$, the maximal $\area(P)$ under given $\area(P^\circ)$ is attained at the regular $n$-gon.
\end{theorem}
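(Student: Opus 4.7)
The plan is to follow the same duality strategy that worked for triangles in the previous subsection. The crucial ingredient is to extend equation~\eqref{eq:two-crofton} to arbitrary spherical $n$-gons, proving the identity
\begin{equation*}
\area(P^\circ) + \per(P) = 2\pi.
\end{equation*}
This is immediate from the spherical angle-sum formula $\area(Q) = \sum_{i=1}^n \beta_i - (n-2)\pi$ applied to $Q = P^\circ$: under polar duality each interior angle of $P^\circ$ equals $\pi - \ell_j$, where $\ell_j$ is the length of the side of $P$ polar to that vertex, so the $n$ angles of $P^\circ$ sum to $n\pi - \per(P)$ and we obtain $\area(P^\circ) = 2\pi - \per(P)$.

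Under this identity, fixing $\area(P^\circ)$ is equivalent to fixing $\per(P)$. Thus the theorem reduces to the classical isoperimetric inequality for spherical $n$-gons: among all spherical $n$-gons with prescribed perimeter, the regular one has the largest area. This is the natural generalization of Lemma~\ref{lem:bohm triangle} from $n=3$ to arbitrary $n$ and is a classical result of L.~Fejes T\'{o}th (see \cite{toth1965regular,toth1972lagerungen}); invoking it closes the argument. Since the polar of a regular $n$-gon is itself regular, the extremal configuration is consistent on both sides of the duality and the bound is sharp, attained precisely at the regular $n$-gon.

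The main obstacle, if one wants a self-contained proof, is the $n$-gon isoperimetric inequality itself. A standard route is Fejes T\'{o}th's symmetrization: replace two consecutive sides by their reflection across the bisector of the chord joining their outer endpoints, which preserves the perimeter and the two outer vertices while weakly increasing the area, with equality only when the two sides are already equal; iterating forces all sides and all angles to coincide. Alternatively, a Lagrange-multiplier calculation at an interior critical point quickly shows that all sides must have equal length and all vertices must lie on a common spherical circle, i.e.\ that the extremum is the regular $n$-gon. Either way, once the perimeter-version isoperimetric inequality is in hand, the duality identity above delivers Theorem~\ref{thm:isoperimetric polygon} with no extra work.
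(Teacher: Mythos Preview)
Your proof is correct and follows the same scheme as the paper: combine the polar identity $\area(X)+\per(X^\circ)=2\pi$ (the $n$-gon version of equation~\eqref{eq:two-crofton}) with Fejes~T\'{o}th's isoperimetric inequality for spherical $n$-gons (Lemma~\ref{lem:isoperimetric polyton}). The only cosmetic difference is that you apply the isoperimetric inequality to $P$ (fixed perimeter $\Rightarrow$ regular maximizes area), whereas the paper's triangle argument applies it to $P^\circ$ (fixed area $\Rightarrow$ regular minimizes perimeter); since the polar of a regular $n$-gon is regular, the two routes are equivalent.
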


The scheme of the proof is the same as for Conjecture~\ref{con:isodual inequality} for the case $d=3$, we use the generalization of Lemma~\ref{lem:bohm triangle} for any $n$-gone:

\begin{lemma}[Spherical isoperimetric inequality for $n$-gons, L.~Fejes T{\'o}th {\cite[Section 30]{toth1965regular}}]
\label{lem:isoperimetric polyton}
Out of spherical $n$-gons with fixed perimeter, the regular $n$-gon has the maximal area.
\end{lemma}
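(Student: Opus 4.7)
The plan is to adapt the classical Steiner symmetrization proof of the isoperimetric inequality to spherical $n$-gons, proceeding in three stages. First I would establish existence and convexity. By compactness of the moduli space of spherical $n$-gons with perimeter $L$ modulo isometry of $\mathbb{S}^{2}$, together with the continuity of area, an area-maximizer $P^*$ exists. A reflection argument --- flipping the chain $A_{i-1}A_iA_{i+1}$ across the geodesic $A_{i-1}A_{i+1}$ at any vertex where the interior angle exceeds $\pi$ --- shows we may assume $P^*$ is convex.

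Next I would show $P^*$ is equilateral. Fix three consecutive vertices $A_{i-1},A_i,A_{i+1}$ with $a=|A_{i-1}A_i|$ and $b=|A_iA_{i+1}|$, and move $A_i$ along the spherical ellipse $E=\{X:d(X,A_{i-1})+d(X,A_{i+1})=a+b\}$, keeping every other vertex of $P^*$ fixed. The total perimeter of $P^*$ is preserved and its area changes only through the triangle $T=A_{i-1}A_iA_{i+1}$. By formula~\eqref{eq:two-crofton}, $\area T=2\pi-\per(T^\circ)$, so maximizing $\area T$ along $E$ is equivalent to minimizing $\per(T^\circ)$ along $E$. The reflective symmetry of $E$ across the perpendicular bisector of $A_{i-1}A_{i+1}$ makes the symmetric position $a=b$ a critical point of this polar perimeter, and a direct spherical-trigonometric computation identifies it as a strict minimum. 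Hence every pair of adjacent sides of $P^*$ is equal, so $P^*$ is equilateral.

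Finally, I would pass from equilateral to regular by a four-vertex variation. Consider four consecutive vertices $A_{i-1},A_i,A_{i+1},A_{i+2}$, each consecutive pair at distance $L/n$. Varying $A_i,A_{i+1}$ subject to $|A_{i-1}A_i|=|A_iA_{i+1}|=|A_{i+1}A_{i+2}|=L/n$ leaves a one-parameter family of configurations; only the quadrilateral $A_{i-1}A_iA_{i+1}A_{i+2}$ changes, and the total perimeter is preserved. Applying~\eqref{eq:two-crofton} to a diagonal triangulation of this quadrilateral and exploiting the reflective symmetry across the perpendicular bisector of $A_{i-1}A_{i+2}$ again, one sees that the enclosed area is strictly maximized at the symmetric configuration, forcing the interior angles at $A_i$ and $A_{i+1}$ to coincide. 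Since this holds for every $i$, the extremal $P^*$ is both equilateral and equiangular, hence regular.

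The main obstacle is the strict-monotonicity step: on $\mathbb{S}^2$, reflective symmetry pins down a critical point but does not by itself force global maximality along the ellipse. The cleanest route is to translate each area inequality via~\eqref{eq:two-crofton} into a strict-convexity statement about a polar-triangle perimeter, and then verify that convexity by direct spherical trigonometry. A secondary issue is to ensure that the variations stay inside the regime of simple (non-self-intersecting) polygons so that the enclosed area remains well-defined; this is a local check in a neighborhood of $P^*$ and follows from the convexity established in the first stage.
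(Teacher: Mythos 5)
The paper offers no proof of this lemma: it is imported verbatim from L.~Fejes T\'oth's \emph{Regular Figures} (the cited Section 30), so there is no internal argument to compare against. Your plan is the classical Zenodorus--Steiner scheme (existence of a maximizer, equilateralization by a one-vertex variation along a spherical ellipse, equiangularization by a further variation), which is essentially the route the cited source takes, so the architecture is right.

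The genuine gap is exactly the one you flag, and it is not a technicality --- it is the entire analytic content of the lemma, localized. In your second stage you must show that as $A_i$ runs over the spherical ellipse $E$ with foci $A_{i-1},A_{i+1}$, the area of the triangle $A_{i-1}A_iA_{i+1}$ is \emph{globally} maximized at the isoceles position. Reflective symmetry of $E$ only produces critical points on the axis (the ellipse meets the perpendicular bisector twice, and it also has the two degenerate positions near the major axis where the area tends to $0$), and the detour through \eqref{eq:two-crofton} buys nothing: since $\per(T^\circ)=3\pi-(\alpha+\beta+\gamma)=2\pi-\area(T)$, minimizing the polar perimeter along $E$ is literally the same optimization problem, with the same symmetry, not a reduction to a convexity statement you can verify independently. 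The standard way to close this step is Lexell's theorem (which the paper itself alludes to in Section 3): the level sets of $X\mapsto\area(A_{i-1}XA_{i+1})$ are circles through $-A_{i-1}$ and $-A_{i+1}$, and one shows that the Lexell circle through the isoceles point of $E$ is tangent to $E$ there and otherwise disjoint from the arc of larger area, using convexity of the spherical ellipse; that computation is what you defer, and it must actually be performed. The identical objection applies to your four-vertex linkage step (which is a special case of the lemma that the cyclic spherical quadrilateral maximizes area among those with given sides). Two smaller but real issues: the compactness argument must exclude degenerate maximizers (collapsing vertices, non-simple limits), and the reflex-vertex reflection can create self-intersections, so convexity of the maximizer needs more care than a single flip. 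Nothing in your outline is false, but at present the two steps that carry all the weight are asserted rather than proved.
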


For the case hyperbolic plane Lemma~\ref{lem:isoperimetric polyton} was proved in K.~Bezdek~\cite{bezdek1984einelementarer}. See also \cite{csikos2006generalization} for a generalization of these statements to the class of polygons formed by circular arcs of fixed radius.

%
%
%
%
%
%
%

\section{The four-dimensional case}

Now we utilize the result of J.~B\"{o}hm:

\begin{lemma}[Isoperimetric inequality for a spherical tetrahedron, \cite{bohm1986zumisoperimetrischen}]
\label{lem:bohm tetrahedron}
Out of spherical tetrahedra with fixed volume, the regular tetrahedron has the \emph{local} minimal surface area.
\end{lemma}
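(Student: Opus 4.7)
The plan is to carry out a second-variation analysis at the regular spherical tetrahedron $T_0$, treating surface area as a functional on the moduli space of spherical tetrahedra in $\mathbb{S}^3$ modulo isometries, constrained to a level set of $\vol$. First I would parametrize tetrahedra near $T_0$ by their six edge lengths $\ell_1,\dots,\ell_6$, which form valid local coordinates once a vertex and an orthonormal frame at it are fixed. Schl\"{a}fli's formula gives
\[
d\vol(T)=\tfrac{1}{2}\sum_{i=1}^{6}\ell_i\,d\alpha_i,
\]
with $\alpha_i$ the interior dihedral angle along edge $i$, while $\area(\partial T)$ is a sum of four spherical-triangle areas and so is expressible in the $\ell_i$ via the spherical law of cosines. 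By the $S_4$-symmetry of $T_0$, the gradients of $\area$ and $\vol$ at $T_0$ are both $S_4$-invariant vectors in the six-dimensional parameter space, hence proportional, so $T_0$ is automatically a critical point of the Lagrangian $F:=\area-\lambda\vol$ for the correct value of $\lambda$.

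The real task is then to show that $\mathrm{Hess}\,F$ at $T_0$ is positive definite on the tangent space to $\{\vol=\vol(T_0)\}$. The decisive simplification is that $\mathrm{Hess}\,F$ commutes with the induced $S_4$-action on the six-dimensional space of edge variations. A character computation shows that this permutation representation decomposes as $\mathbf{1}\oplus\mathbf{E}\oplus\mathbf{V}$, where $\mathbf{E}$ is the two-dimensional irrep and $\mathbf{V}$ is the standard three-dimensional irrep of $S_4$. By Schur's lemma $\mathrm{Hess}\,F$ acts as a scalar on each isotypic component, so only three numbers need to be checked: the trivial (uniform dilation) direction is killed by the constraint via the choice of $\lambda$, and on $\mathbf{E}$ and $\mathbf{V}$ the positivity of the eigenvalue reduces to the sign of a second derivative along a single explicit test variation---for example, lengthening one pair of opposite edges while shortening another pair for $\mathbf{E}$, and moving one vertex along a geodesic for $\mathbf{V}$.

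The main obstacle is the honest evaluation of these two scalars. It requires differentiating the spherical law of cosines twice at the regular configuration to extract the second derivatives of face areas and dihedral angles in terms of edge variations, and then recombining them using the Schl\"{a}fli identity to eliminate $\lambda$. I expect the resulting expressions to depend only on the common edge length of $T_0$, and strict positivity on both non-trivial isotypic components should persist for every non-degenerate regular tetrahedron in $\mathbb{S}^3$, degenerating only at the two endpoints (the flat tetrahedron and the hemisphere-filling one). Finding a geometric reformulation of at least one of the eigenvalues---such as a statement that spherical-triangle area is strictly convex in its side lengths near the equilateral configuration, or a direct rigidity argument in the style of Cauchy---would shorten the trigonometric calculation considerably, and locating such a reformulation is the step I expect to absorb most of the effort.
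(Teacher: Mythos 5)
The paper does not prove this lemma at all: it is imported verbatim from B\"{o}hm's work \cite{bohm1986zumisoperimetrischen} and used as a black box (indeed, the authors stress in the acknowledgments that B\"{o}hm's result is only \emph{local}, which is precisely why their four-dimensional theorem is local). So your proposal is not an alternative to an argument in the paper; it is an attempt to reprove B\"{o}hm's theorem itself, and as such it has a genuine gap. The structural part is sound: edge lengths are valid local coordinates near the regular tetrahedron, the permutation representation of $S_4$ on the six edges does decompose as $\mathbf{1}\oplus\mathbf{E}\oplus\mathbf{V}$ with each irreducible appearing once, so by Schur's lemma the Hessian of $\area-\lambda\vol$ is determined by three scalars, the tangent space to the volume constraint is exactly $\mathbf{E}\oplus\mathbf{V}$ (provided $d\vol$ is nonzero on the invariant direction, which should be said explicitly), and criticality of the regular tetrahedron follows from symmetry. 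But the conclusion of the lemma is exactly the positivity of the two remaining eigenvalues, and at that point your text switches from proof to expectation (``I expect the resulting expressions to depend only on the common edge length \dots strict positivity \dots should persist''). Nothing in the symmetry reduction forces these signs; they are genuinely quantitative facts about spherical trigonometry, they are the entire content of B\"{o}hm's paper, and they must be verified for \emph{every} edge length of a regular spherical tetrahedron, not just near the Euclidean limit, since the lemma is applied at the specific (non-infinitesimal) regular tetrahedra arising as solid angles and polars thereof.

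Concretely, what is missing is the evaluation of the two second derivatives: the second variation of $\sum_{\text{faces}}\area$ minus $\lambda$ times the second variation of $\vol$ along an $\mathbf{E}$-type variation (opposite edge pairs lengthened/shortened) and along a $\mathbf{V}$-type variation, with $\lambda$ fixed by first-order criticality and the second variation of $\vol$ handled through the derivative of the dihedral angles in Schl\"{a}fli's formula (the term $\tfrac12\sum d\ell_i\wedge d\alpha_i$ does not vanish and must be computed, not only quoted). Until those two numbers are exhibited and shown positive for all admissible sizes of the regular tetrahedron --- or until you simply cite \cite{bohm1986zumisoperimetrischen} as the paper does --- the argument establishes only that the regular tetrahedron is a symmetric critical point, which is far weaker than local minimality: symmetric critical points of constrained functionals can perfectly well be saddles on the nontrivial isotypic components.
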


For the regular spherical tetrahedron $C$, denote by $U(C)$ its neighborhood in the space of all spherical tetrahedra with volume equal to the volume of $C$ where $C$ locally minimize the surface area. Let the neighborhood $U^\circ(C)$ consist of the polars to the simplices from $U(C^\circ)$. 

Let the set $\mathcal U$ be the union of $\cup (U(C) \cap U^\circ (C))$ over all regular simplices $C$.

Suppose $S$ belongs to $\mathcal{U}$ and let $C$ be the regular spherical tetrahedron with $\vol(C^\circ)=\vol(S^\circ)$, then $S \in U(C)$, $S^\circ \in U(C^\circ)$.

From Lemma~\ref{lem:bohm tetrahedron} it follows that $\area (\partial S^\circ)\geq \area (\partial C^\circ)$. Now apply the following equality (see \cite[Proof of Theorem 4.2]{izmestiev2014infinitesimal}, this equality can be proven by direct application of the formula for the area of a spherical triangle):
\begin{equation}
\label{eq:three-crofton}
\area (\partial X) +  \area(\partial X^\circ)=4\pi.
\end{equation}
	
This implies 
\[
\area (\partial S) \leq \area (\partial C).
\]
It remains to apply Lemma~\ref{lem:bohm tetrahedron} once again and obtain $\vol (S)\leq \vol (C)$.

This proves Conjecture~\ref{con:isodual inequality} for $S \in \mathcal{U}$. And therefore Conjectures~\ref{con:main conjecture} and \ref{con:main conjecture generalized} hold for the class of four-dimensional simplices whose solid angles belong to $\mathcal{U}$.

Let us summarize everything as a theorem:
\begin{theorem}
\label{thm:tetrahedral}
The four-dimensional regular simplex $T$ has a neighborhood $U(T)$ (in space of four-dimensional simplex) such that any simplex from $U(T)$
\begin{enumerate}[(i)]
	\item \label{thm: tetrahedral: solid angle} has a solid angle at some vertex not greater than the solid angle at a vertex of the regular simplex;
	\item \label{thm: tetrahedral: trihedral angle} has an edge with the trihedral angle not greater than the trihedral angle at an edge of the regular simplex;
	\item \label{thm: tetrahedral: diheral angle} has a two-face with the dihedral angle not greater than the dihedral angle at a two-face of the regular simplex.
\end{enumerate}
 \end{theorem}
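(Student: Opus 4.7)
My plan is to deduce all three statements simultaneously from a single well-chosen index $i$, by running the duality-plus-isoperimetric argument through dimensions $3$, $2$, and $1$ on one polar tetrahedron $V_i^\circ$. For $T$ sufficiently close to the regular $4$-simplex the vertex solid angles $V_0,\dots,V_4 \subset \mathbb{S}^3$ and their polars $V_0^\circ,\dots,V_4^\circ$ lie in the B\"{o}hm neighborhood $\mathcal{U}$ of the regular spherical tetrahedron $C$ corresponding to a vertex of the regular simplex. Since the five polars tile $\mathbb{S}^3$, pigeonhole produces an index $i$ with $\vol(V_i^\circ) \geq \vol(C^\circ)$, and the reduction immediately preceding the theorem then gives $\vol(V_i) \leq \vol(C)$, establishing (i).

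For (ii) I keep this same $i$. Lemma~\ref{lem:bohm tetrahedron} together with the monotonicity of surface area in the one-parameter family of regular spherical tetrahedra yields $\area(\partial V_i^\circ) \geq \area(\partial C^\circ)$, so some $2$-face $F$ of $V_i^\circ$ has area at least $\area(\partial C^\circ)/4 = \area(F_C)$, where $F_C$ is any $2$-face of $C^\circ$ (all four are congruent). This $F$ is the normal cone at some edge $e$ of $T$ through $v_i$, and the trihedral angle of $T$ at $e$ is exactly the polar area $\area(F^\circ)$ inside the great $2$-sphere orthogonal to $e$. Applying Lemma~\ref{lem:bohm triangle} to $F$ and using monotonicity among regular spherical triangles gives $\per(F) \geq \per(F_C)$, and the $2$-dimensional Crofton identity~\eqref{eq:two-crofton} then converts this into $\area(F^\circ) \leq \area(F_C^\circ)$, the trihedral angle at an edge of the regular simplex.

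For (iii) I descend one more level on the same $F$. Because $\per(F) \geq \per(F_C) = 3L_C$, where $L_C$ is the common side length of the equilateral triangle $F_C$, some side of $F$ has spherical length at least $L_C$. This side is a $1$-face of $V_i^\circ$, i.e.\ the normal cone at a $2$-face $G$ of $T$ containing $v_i$; the $1$-dimensional polarity in the plane orthogonal to $G$ expresses the dihedral angle of $T$ at $G$ as $\pi$ minus this arc length, and the same relation identifies $\pi - L_C$ with the dihedral angle of the regular simplex at a $2$-face. Hence the dihedral angle at $G$ is at most $\pi - L_C$, which proves (iii).

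The main obstacle I expect is ensuring that the B\"{o}hm step actually delivers $\area(\partial V_i^\circ) \geq \area(\partial C^\circ)$ rather than only $\area(\partial V_i^\circ) \geq \area(\partial C'^\circ)$ for the regular $C'$ with $\vol(C'^\circ) = \vol(V_i^\circ)$: closing this gap requires a quantitative monotonicity statement within the one-parameter family of regular spherical tetrahedra, and it is this smooth comparison that ultimately dictates how small $U(T)$ must be. The rest of the argument reduces to standard bookkeeping with the normal fan and the two classical Crofton identities in dimensions $2$ and $1$.
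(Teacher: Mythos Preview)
Your proposal is correct and follows essentially the same route as the paper: pigeonhole on the normal fan, B\"{o}hm's local isoperimetric inequality, and the Crofton identities to descend from solid to trihedral to dihedral angles, with monotonicity in the one-parameter family of regular spherical tetrahedra bridging the comparison at each step. The only cosmetic difference is that for part~(iii) the paper (cf.\ the argument given for Theorems~\ref{thm:4-cube} and~\ref{thm:120-cell}) invokes the spherical excess formula on the small trihedral angle rather than the perimeter of its polar $F$, but by \eqref{eq:two-crofton} these are the same computation.
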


%
%
%
%

For dimension three in spherical geometry, we do not know how to prove even the local analogue of Lemma~\ref{lem:isoperimetric polyton} for other platonic solids. Therefore we only formulate an analogue of the simplex result for the tesseract ($4$-cube) and the hecatonicosachoron ($120$-cell):

\begin{theorem}
\label{thm:4-cube}
The tesseract $T$ has a neighborhood $U(T)$ (in space of simple polytopes with $16$ vertices) such that any polytope from $U(T)$
\begin{enumerate}[(i)]
	\item \label{thm: 4-cube: solid angle} has a solid angle at some vertex not greater than the solid angle at a vertex of the tesseract;
	\item \label{thm: 4-cube: trihedral angle} has an edge with the trihedral angle not greater than the trihedral angle at an edge of the tesseract;
	\item \label{thm: 4-cube: diheral angle} has a two-face with the dihedral angle not greater than the dihedral angle at a two-face of the tesseract.
\end{enumerate}
 \end{theorem}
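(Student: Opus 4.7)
The plan is to mirror the proof of Theorem~\ref{thm:tetrahedral}, with one crucial preliminary observation. The vertex cone of the regular tesseract is the positive orthant $\mathbb R_+^4$, whose spherical cross-section $C_0 := \mathbb R_+^4 \cap \mathbb S^3$ is the \emph{regular} spherical tetrahedron with all edges and all dihedral angles equal to $\pi/2$; in particular $C_0$ is self-polar, so B\"ohm's local isoperimetric inequality (Lemma~\ref{lem:bohm tetrahedron}) applies in its neighborhood.

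For part~\eqref{thm: 4-cube: solid angle}, shrink $U(T)$ so that every $P \in U(T)$ is combinatorially equivalent to the tesseract; then the 16 normal cones $V_v^\circ$ partition $\mathbb S^3$, and pigeonhole produces a vertex $v$ with $\vol(V_v^\circ) \geq 2\pi^2/16 = \vol(C_0^\circ)$. Now one runs the chain from Section~4 with $C_0$ in place of the regular 4-simplex cross-section: let $C$ be the regular spherical tetrahedron with $\vol(C^\circ) = \vol(V_v^\circ)$; Lemma~\ref{lem:bohm tetrahedron} at $C^\circ$ gives $\area(\partial V_v^\circ) \geq \area(\partial C^\circ)$; identity~\eqref{eq:three-crofton} converts this into $\area(\partial V_v) \leq \area(\partial C)$; a second application of Lemma~\ref{lem:bohm tetrahedron} (to $V_v$ and the regular tetrahedron of the same volume) yields $\vol(V_v) \leq \vol(C)$; finally, on the one-parameter family of regular spherical tetrahedra $\vol$ and $\vol(\cdot^\circ)$ vary oppositely near $C_0$ (as edge length crosses $\pi/2$ the dihedral crosses $\pi/2$ as well, so the polar edge $\pi - \theta$ and hence the polar volume decreases), giving $\vol(C) \leq \vol(C_0)$.

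For parts~\eqref{thm: 4-cube: trihedral angle} and~\eqref{thm: 4-cube: diheral angle}, the spherical tetrahedron $V_v$ produced above is close to $C_0$ and satisfies $\vol(V_v) \leq \vol(C_0)$. Its six dihedral angles $\theta_e$ are the polytope's dihedral angles at the six 2-faces through $v$, and for each spherical vertex $p$ of $V_v$ the area $\sum_{e \ni p}\theta_e - \pi$ of the vertex figure of $V_v$ at $p$ is precisely the trihedral angle of $P$ at the edge of $P$ corresponding to $p$. Schl\"afli's formula at the orthant reads $\Delta\vol(V_v) \approx \frac{\pi}{4}\sum_e(\theta_e - \pi/2)$ to first order, so if all $\theta_e > \pi/2$ then $\vol(V_v) > \vol(C_0)$ for small enough perturbations, a contradiction proving~\eqref{thm: 4-cube: diheral angle}. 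If instead every spherical vertex $p$ had $\sum_{e \ni p}\theta_e > 3\pi/2$, double-counting (each edge has two endpoints) gives $\sum_e \theta_e > 3\pi$, producing the same contradiction and proving~\eqref{thm: 4-cube: trihedral angle}.

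The main (and essentially only) obstacle is the recognition that the tesseract's vertex cone, which at first sight looks ``right-angled'' rather than ``simplicial-symmetric,'' is in fact a regular spherical tetrahedron (of edge length $\pi/2$), so Lemma~\ref{lem:bohm tetrahedron} applies. Once this is noted, the remainder is a careful repetition of Section~4 plus a first-order Schl\"afli computation to pass from the vertex statement to the edge and 2-face statements.
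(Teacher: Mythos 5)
Your conclusion (i) is handled correctly and in essentially the paper's way: since the polytope is simple, the $16$ normal cones partition $\mathbb S^3$, the vertex and normal cones of the tesseract are orthants, i.e.\ regular right-angled spherical tetrahedra (self-polar up to the antipodal map), and the chain ``pigeonhole, Lemma~\ref{lem:bohm tetrahedron} at $C^\circ$, identity \eqref{eq:three-crofton}, Lemma~\ref{lem:bohm tetrahedron} again, monotonicity along the regular family'' is the argument of Theorem~\ref{thm:tetrahedral} transplanted to the orthant; your explicit monotonicity remark is a point the paper leaves implicit. Your argument for (iii) (if all six $\theta_e>\pi/2$ then $\vol(V_v)>\vol(C_0)$) is also acceptable locally, because when every $\delta_e=\theta_e-\pi/2$ is positive the linear Schl\"afli term is at least $\tfrac{\pi}{4}\max_e\delta_e$ and dominates the $O(\max_e\delta_e^2)$ remainder. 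The gap is in (ii). There you replace the four per-vertex hypotheses by the single aggregate inequality $\sum_e\theta_e>3\pi$ and then invoke only the first-order Schl\"afli formula to conclude $\vol(V_v)>\vol(C_0)$. That implication is false near $C_0$: the linear term $\tfrac{\pi}{4}\sum_e\delta_e$ can be arbitrarily small compared with the size of the perturbation, and then second-order terms decide the sign. Concretely, a Gram-matrix computation at the orthant gives $\partial\theta_e/\partial\ell_f=1$ if $f$ is the edge opposite to $e$ and $0$ otherwise, so by Schl\"afli the Hessian of $\vol$ in the dihedral angles is $\tfrac12P$ with $P$ the opposite-edge involution, which is indefinite; prescribing $\theta_e=\pi/2+a_e+t^3$ with $a$ antisymmetric on the three opposite pairs and $\sum_e a_e^2=t^2$, one checks $\sum_e\theta_e>3\pi$ while $\vol\approx\vol(C_0)-t^2/2<\vol(C_0)$, for tetrahedra arbitrarily close to $C_0$. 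These examples violate your per-vertex hypothesis (a purely antisymmetric perturbation always makes some vertex sum negative), so statement (ii) itself is not threatened, but your proof of it is: rescuing it would need a second-order analysis showing that positivity of all four vertex sums forces the symmetric part of the perturbation to dominate, which you do not carry out.

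The paper's route for (ii)--(iii) avoids Schl\"afli entirely and you should either adopt it or supply the missing estimate. For (ii), argue as in Theorem~\ref{thm:tetrahedral} and the reduction of Conjecture~\ref{con:main conjecture generalized}: Lemma~\ref{lem:bohm tetrahedron} gives $\area(\partial V_v^\circ)\ge\area(\partial C_0^\circ)$ for the normal cone of maximal volume, hence some facet of $V_v^\circ$ has area at least that of a facet of the orthant, namely $\pi/2$; this facet is the normal cone of $P$ at an edge $E$ through $v$, which inside its great $\mathbb S^2$ is polar to the link of $P$ at $E$; the two-dimensional argument of Section 3 (the global triangle isoperimetric inequality plus \eqref{eq:two-crofton}) then bounds the link's area, i.e.\ the trihedral angle at $E$, by $\pi/2$. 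Then (iii) follows from (ii) exactly as in part \eqref{item: dihedral of polytopes in r3} of Theorem~\ref{thm:polytopes in r3}: the link of that edge is a spherical triangle of area at most $\pi/2$ whose angles are dihedral angles of $P$, so their sum is at most $3\pi/2$ and one of them is at most $\pi/2$.
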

 
\begin{theorem}
\label{thm:120-cell}
The the hecatonicosachoron $H$ has a neighborhood $U(H)$ (in space of simple polytopes with $600$ vertices) such that any polytope from $U(H)$

\begin{enumerate}[(i)]
	\item has a solid angle at some vertex not greater than the solid angle at a vertex of the hecatonicosachoron;
	\item has an edge with the trihedral angle not greater than the trihedral angle of the hecatonicosachoron;
	\item has a two-face with the dihedral angle not greater than the dihedral angle at a two-face of the hecatonicosachoron.
\end{enumerate}
\end{theorem}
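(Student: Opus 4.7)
The plan is to imitate, one combinatorial level up, the argument behind Theorems~\ref{thm:tetrahedral} and~\ref{thm:4-cube}, using that the hecatonicosachoron $H$ is a simple $4$-polytope with $600$ vertices whose vertex figure is the regular spherical tetrahedron $C\subset\mathbb S^3$. For a simple polytope $P$ with $600$ vertices close enough to $H$ (which forces combinatorial equivalence to $H$), each vertex figure $V_v$ is a spherical tetrahedron close to $C$, and the polars $V_v^\circ$ are the maximal cones of the normal fan of $P$ and therefore tile $\mathbb S^3$. Summing volumes gives $\sum_v\vol(V_v^\circ)=2\pi^2$, so pigeonholing over the $600$ summands produces a vertex $v_0$ with $\vol(V_{v_0}^\circ)\ge\vol(C^\circ)=2\pi^2/600$.

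Item (i) then follows from verbatim repetition of the Section~4 argument applied at $v_0$: if $U(H)$ is small enough that both $V_{v_0}$ and $V_{v_0}^\circ$ lie in B\"{o}hm's local neighborhood $\mathcal U$, then Lemma~\ref{lem:bohm tetrahedron} together with the Crofton-type identity~\eqref{eq:three-crofton} gives $\vol(V_{v_0})\le\vol(C)$. For item (ii), B\"{o}hm's inequality applied to $V_{v_0}^\circ$ (compared to the regular tetrahedron of matching polar volume and using local monotonicity of the area/volume relation for regular spherical tetrahedra) also yields $\area(\partial V_{v_0}^\circ)\ge\area(\partial C^\circ)$; by pigeonhole over the four facets, some triangular face $F$ of $V_{v_0}^\circ$ has $\area F\ge\area F_{reg}$. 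The polar of $F$ inside the great $\mathbb S^2$ through $F$ equals the link in $V_{v_0}$ of the vertex dual to $F$, i.e.\ the trihedral angle at the edge of $P$ corresponding to that vertex, and since $F$ is near regular, the two-dimensional case of Conjecture~\ref{con:isodual inequality} already proved in Section~3 (via Lemma~\ref{lem:bohm triangle} and~\eqref{eq:two-crofton}) shows that this trihedral angle is at most the trihedral angle of $H$ at an edge.

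Item (iii) is analogous: applying Lemma~\ref{lem:bohm triangle} to $F$ gives $\per F\ge\per F_{reg}$, so by pigeonhole some edge $e$ of $F$ has length at least the common edge length $\ell_{reg}$ of $F_{reg}$. Under spherical polarity on $\mathbb S^3$, $e$ corresponds to an edge $e^\ast$ of $V_{v_0}$, and the dihedral angle of $V_{v_0}$ at $e^\ast$, which coincides with the dihedral angle of $P$ at the $2$-face of $P$ dual to $e^\ast$, equals $\pi-\mathrm{length}(e)\le\pi-\ell_{reg}$; the right-hand side is the dihedral angle of $H$ at a $2$-face.

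The proof contains no new analysis: every geometric inequality is supplied by the already-cited Lemmas~\ref{lem:bohm triangle} and~\ref{lem:bohm tetrahedron}. The only obstacle is bookkeeping---choosing $U(H)$ small enough uniformly across all $600$ vertex figures $V_v$, $V_v^\circ$ and the four face triangles $F$ at $v_0$ so that every object sits in the appropriate local B\"{o}hm neighborhood and in the local range where the regular-simplex area/volume and perimeter/area relations are monotone---a routine compactness argument, since only finitely many combinatorial pieces are involved.
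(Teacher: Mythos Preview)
Your argument is correct and, for items (i) and (ii), follows the paper's own route (the paper merely says ``similarly to Theorem~\ref{thm:tetrahedral}'', and what you wrote is precisely that similarity spelled out: pigeonhole on the normal fan to find $\vol(V_{v_0}^\circ)\ge\vol(C^\circ)$, apply B\"ohm's local inequality together with \eqref{eq:three-crofton} for (i), and then descend one dimension via the face $F$ and the two-dimensional case of Conjecture~\ref{con:isodual inequality} for (ii)).

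For item (iii) you take a different, longer path than the paper. You push the descent one more level: from $\area F\ge\area F_{reg}$ you invoke Lemma~\ref{lem:bohm triangle} to get $\per F\ge\per F_{reg}$, pigeonhole to a long edge $e$, and then use the edge-length/dihedral-angle polarity relation $\text{(dihedral at }e^\ast)=\pi-\mathrm{length}(e)$. This is valid. The paper instead short-circuits this step by applying the spherical excess formula directly to the small trihedral angle found in (ii): since that trihedral angle is a spherical triangle with area at most the regular one, the sum of its three interior angles---which are exactly the dihedral angles of $P$ at the three $2$-faces through that edge---is at most $3$ times the regular dihedral angle, so the minimum of the three is bounded as required. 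Your approach has the virtue of keeping a uniform ``isoperimetry $+$ polarity'' pattern through all three items; the paper's approach for (iii) is more elementary, needing only the area formula for spherical triangles and no further appeal to Lemma~\ref{lem:bohm triangle} or monotonicity.
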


\begin{proof}[Sketch of the proof]
Conclusion~\eqref{thm: 4-cube: solid angle} is proved similarly to Theorem~\ref{thm:tetrahedral} for the case of the four-dimensional simplex.
	
Conclusion~\eqref{thm: 4-cube: trihedral angle} is proved similarly to Theorem~\ref{thm:tetrahedral}.
	
Conclusion~\eqref{thm: 4-cube: diheral angle} is proved similarly to the proof of \eqref{item: dihedral of polytopes in r3} of Theorem~\ref{thm:polytopes in r3}: We choose the minimal trihedral angle and note that one of its dihedral angles should be not greater than a dihedral angle of the corresponding regular polytope (tesseract or hecatonicosachoron), since this trihedral angle is itself not greater than the trihedral angle of the regular polytope.
\end{proof}

\begin{remark}
Like in Theorem~\ref{thm:polytopes in r3}, we can drop the convexity condition on $P$ and assume only that it is a $PL$-sphere on $16$ or $600$ vertices, and each vertex of $P$ is simple.
\end{remark}


\section{Other remarks and questions}

1.  Returning to the general case of~\cite[Question 7]{karasev2015bounds}, we think that Conjecture~\ref{con:main conjecture} is valid in all dimensions. This must be clear from the above proofs that the validity of this conjecture is sufficient to prove Conjecture~\ref{con:isodual inequality} in all dimensions. Note that a similar result was established in~\cite{gao2003intrinsic}: For all spherical convex sets, the maximal $\vol (X)$ under given $\vol (X^\circ)$ is attained at a spherical cap. In  \cite{karasev2015bounds} this was used to give a good asymptotic bound for the maximal solid angle.

2. The proof of the two and three dimensional versions of Conjecture~\ref{con:isodual inequality} uses equations \eqref{eq:two-crofton} and \eqref{eq:three-crofton} connecting the area (or the perimeter) of a triangle with its dual. These relations are not a coincidence.

The Crofton formula for a convex subset $X\subset\mathbb S^d$ states that:
\[
\vol_{d-1} \partial X = c_{d, 1} \mu_{d, 1} \{\ell : \ell\cap X\neq\emptyset\},
\]
where $\ell$ is a spherical line in $\mathbb S^d$, $\mu_{d, 1}$ is an $\mathrm{SO}(d+1)$-invariant measure on the set of such lines, and $c_{d,1}$ is a certain coefficient, whose value we need not know.

Note that a point $p$ or $-p$ belongs to a given convex set $X\subset \mathbb{S}^2$ if and only if their polar line $p^\circ$ does not intersect the interior of the polar $X^\circ\subset \mathbb{S}^2$, this is evident from the version of the Hahn--Banach theorem for cones. From this observation and the Crofton formula (after adjusting the constants) follows \eqref{eq:two-crofton}: $\area X + \per (X^\circ)=2\pi.$

For the three-dimensional sphere we make the following observation: The polarity in $\mathbb{S}^3$ maps lines to lines and preserves the $\mathrm{SO}(4)$-invariant measure. A line $\ell$ intersects $X$ if and only if the line $\ell^\circ$ does not intersect $X^\circ$. This gives \eqref{eq:three-crofton}: $\area (\partial X) +  \area(\partial X^\circ)=4\pi.$

By a similar argument one can prove analogous statements for intrinsic volumes of spherical bodies of higher dimension.

3. As we mention in the introduction, all solid angles of some simplices can be close to zero. Hence we cannot find a prescribed inequality in the sets of solid angles of any two given simplices (except for the case $d=2$).

On the other hand, for any two given simplices $T_1$ and $T_2$ (with numbered vertices) in $\mathbb{R}^d$ one can always find two corresponding $(d-2)$-faces $t_1$ and $t_2$ such that the dihedral angle of $t_1$ is not greater than the dihedral angle of $t_2$, \cite{rivin1998similarity}. In view of this we can ask the following:

\begin{conjecture}
Let $P_1$ and $P_2$ be two combinatorially equivalent convex polytopes in $\mathbb{R}^d$. Then there exist corresponding $(d-2)$-faces $t_1$ of $P_1$ and $t_2$ of $P_2$ such that the dihedral angle of $t_1$ is not greater than the dihedral angle of $t_2$.
\end{conjecture}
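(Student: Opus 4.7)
The natural framework is the normal fan: for a convex $d$-polytope $P$ the normal fan meets $\mathbb{S}^{d-1}$ in a spherical cell complex in which each $(d-2)$-face with dihedral angle $\alpha$ corresponds to a spherical arc of length $\pi-\alpha$. The conjecture is therefore equivalent to saying that, for any two combinatorially equivalent complete polytopal fans $\Sigma_1,\Sigma_2\subset\mathbb{R}^d$, some pair of corresponding arcs on $\mathbb{S}^{d-1}$ satisfies $\ell_1(e)\ge \ell_2(e)$.

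The first step in the plan is to dispose of the base cases. For $d=2$ the $(d-2)$-faces are vertices, and the interior angles of two combinatorially equivalent convex $n$-gons have the same sum $(n-2)\pi$, so some corresponding pair automatically satisfies the inequality. If both polytopes are simplices of any dimension, the conjecture is Rivin's theorem~\cite{rivin1998similarity}. For $d=3$ and arbitrary combinatorial type I would argue directly via polar duality together with the spherical isoperimetric inequality for $n$-gons (Lemma~\ref{lem:isoperimetric polyton}) applied facewise to $P_i^\circ$, in the spirit of Theorem~\ref{thm:polytopes in r3}.

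For general $d$ the strategy is proof by contradiction: assume every arc of $\Sigma_1$ is strictly shorter than the corresponding arc of $\Sigma_2$. Each $2$-dimensional cell of $\Sigma_i$ is a spherical polygon whose sides are the arcs just discussed and whose interior angles are trihedral angles of $P_i$ at the corresponding $(d-3)$-faces. The aim is to show that such simultaneous shortening of all sides forces each $2$-cell of $\Sigma_1$ to have strictly smaller area than the corresponding $2$-cell of $\Sigma_2$; summing over all $2$-cells then yields $\vol \mathbb{S}^{d-1} < \vol \mathbb{S}^{d-1}$, a contradiction. The identity~\eqref{eq:three-crofton} and its higher-dimensional analogues indicated in Section~5 suggest an induction on codimension, rewriting sums of cell areas as sums of boundary lengths of polars.

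The hard part will be this monotonicity step: the area of a spherical polygon depends on both its side lengths and its interior angles, and shortening sides alone need not shrink area. One would have to track the trihedral angles at $(d-3)$-faces simultaneously, i.e.\ carry a joint induction asserting the comparison statement for faces of all codimensions at once, effectively an inductive higher-codimension version of the present conjecture. A further obstruction in dimension $d\ge 4$ is Mn\"{e}v universality: realization spaces of combinatorial polytope types can be disconnected, so one cannot hope to deform $P_1$ continuously into $P_2$ through combinatorially equivalent polytopes, which rules out naive interpolation arguments and forces the final comparison to be a genuinely global spherical inequality.
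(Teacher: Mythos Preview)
The paper does \emph{not} prove this statement: it is posed there as an open conjecture (Section~5, item~3), so there is no ``paper's own proof'' to compare against. What you have written is a proof plan with explicitly acknowledged gaps, and the gaps are real.

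Two concrete problems. First, for $d>3$ the $2$-cells of the normal fan on $\mathbb{S}^{d-1}$ do not tile the sphere (only the top-dimensional, $(d-1)$-cells do), so summing their areas does not yield $\vol\mathbb{S}^{d-1}$ and the intended contradiction does not arise. Second, and more fundamentally, the monotonicity you need is false already for spherical polygons: shortening every side of a spherical $n$-gon need not decrease its area (e.g.\ compare a near-lune triangle with sides close to $\pi,\pi,0$ to an equilateral triangle with all sides $\pi/2$). Hence even the $d=3$ version of your contradiction argument does not go through without the extra control over interior angles that you yourself flag as missing.

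Your appeal to Theorem~\ref{thm:polytopes in r3} and Lemma~\ref{lem:isoperimetric polyton} for the $d=3$ case is also not a proof: those results compare an arbitrary polytope to a \emph{fixed regular} reference (Platonic solid, regular $n$-gon), which is exactly why the isoperimetric inequality applies. Comparing two arbitrary combinatorially equivalent polytopes $P_1,P_2$ is a different problem; there is no regular object to pivot through, and the isoperimetric step has no analogue. In short, your reformulation via the normal fan is the right language, and your identification of the obstacles (no monotonicity, Mn\"{e}v universality blocking deformation) is accurate, but nothing here closes the gap: the statement remains a conjecture.
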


\bibliographystyle{abbrv}
\bibliography{solidangles}{}

\begin{thebibliography}{10}

\bibitem{bezdek1984einelementarer}
K.~Bezdek.
\newblock Ein elementarer {B}eweis f\"ur die isoperimetrische {U}ngleichung in
  der {E}uklidischen und hyperbolischen {E}bene.
\newblock {\em Ann. Univ. Sci. Budapest. E\"otv\"os Sect. Math.}, 27:107--112
  (1985), 1984.

\bibitem{bohm1986zumisoperimetrischen}
J.~B{\"o}hm.
\newblock Zum isoperimetrischen {P}roblem bei {S}implexen in
  {$r$}-dimensionalen {R}\"aumen konstanter {K}r\"ummung.
\newblock {\em Studia Sci. Math. Hungar.}, 21(1-2):267--281, 1986.

\bibitem{csikos2006generalization}
B.~Csik{\'o}s, Z.~L{\'a}ngi, and M.~Nasz{\'o}di.
\newblock A generalization of the discrete isoperimetric inequality for
  piecewise smooth curves of constant geodesic curvature.
\newblock {\em Period. Math. Hungar.}, 53(1-2):121--131, 2006.

\bibitem{toth1965regular}
L.~Fejes~T{\'o}th.
\newblock {\em Regular figures}.
\newblock A Pergamon Press Book. The Macmillan Co., New York, 1964.

\bibitem{toth1972lagerungen}
L.~Fejes~T{\'o}th.
\newblock {\em Lagerungen in der {E}bene auf der {K}ugel und im {R}aum}.
\newblock Springer-Verlag, Berlin-New York, 1972.
\newblock Zweite verbesserte und erweiterte Auflage, Die Grundlehren der
  mathematischen Wissenschaften, Band 65.

\bibitem{gao2003intrinsic}
F.~Gao, D.~Hug, and R.~Schneider.
\newblock Intrinsic volumes and polar sets in spherical space.
\newblock {\em Math. Notae}, 41(2001/02):159--176, 2003.
\newblock
  \href{https://www.uni-due.de/~hm0045/Pub.collection/21.pdf}{uni-due.de}.

\bibitem{izmestiev2014infinitesimal}
I.~Izmestiev.
\newblock Infinitesimal rigidity of convex polyhedra through the second
  derivative of the {H}ilbert--{E}instein functional.
\newblock {\em Canad. J. Math}, 66(4):783--825, 2014.

\bibitem{karasev2015bounds}
R.~Karasev, J.~Kyn{\v{c}}l, P.~Pat{\'a}k, Z.~Pat{\'a}kov{\'a}, and M.~Tancer.
\newblock Bounds for {P}ach's selection theorem and for the minimum solid angle
  in a simplex.
\newblock {\em Discrete Comput. Geom.}, 54(3):610--636, 2015.

\bibitem{maehara2013dihedral}
H.~Maehara.
\newblock On dihedral angles of a simplex.
\newblock {\em Journal of Mathematics Research}, 5(2):p79, 2013.

\bibitem{rivin1998similarity}
I.~Rivin and J.~H. Lindsey, II.
\newblock Problems and {S}olutions: {S}olutions: {A} {S}imilarity {C}riterion:
  10462.
\newblock {\em Amer. Math. Monthly}, 105(7):671, 1998.

\end{thebibliography}

\end{document}